\newtheorem{theorem}{Theorem}[section]
\newtheorem{proposition}{Proposition}[section]
\newtheorem{remark}[theorem]{Remark}
\begin{document}

\title{An iterative method to solve Lyapunov equations}

%    Information for first author
\author{Licio Hernanes Bezerra and Felipe Wisniewski (*)}
%    Address of record for the research reported here
\date{Departamento de Matem\'atica, \\
Universidade Federal de S. Catarina,  Florian\'opolis, SC\\ Brazil 88040-900
licio.bezerra@ufsc.br \\
(*) Universidade Estadual do Paran\'a, Uni\~ao da Vit\'oria, PR \\ Brazil 84600-185
felipewisniewski@yahoo.com.br}

\maketitle

\begin{abstract}
We present here a new splitting method to solve Lyapunov equations of the type $AP + PA^T=-BB^T$ in a Kronecker product form. Although that resulting matrix is of order $n^2$, each
iteration of the method demands only two operations with the matrix $A$: a multiplication of the form $(A-\sigma I) \hat{B}$ and a inversion of the form $(A-\sigma I)^{-1}\hat{B}$.
We see that for some choice of a parameter the iteration matrix is such that all their eigenvalues are in absolute value less than 1, which means that it should converge without depending
on the starting vector. Nevertheless we present a theorem that enables us how to get a good starting vector for the method.
\end{abstract}

\maketitle

\section{Introduction}

Consider the following Lyapunov equation
\begin{equation}\label{lyap}
AP+PA^T=-BB^T,
\end{equation}
where $A \in \mathbb{R}^{n \times n}$ and $B \in \mathbb{R}^{n \times p}$.
Let $\lambda(A)$ denote the spectrum of $A$. If all the eigenvalues of $A$ have negative real part, we call $A$ a stable matrix. In this case,
there is an unique solution $P$ of equation \eqref{lyap} and $P$ is a symmetric semi-positive defined matrix.
The equation \eqref{lyap} appears in several engineering problems as can be seen in \cite{Sim2016}. In Mathematics, for instance,
it appears in the calculation of Gram matrices $P$ e $Q$ associated to a dynamical system given by the following equations:
$$
\Sigma\equiv\left\{\begin{array}{l}
\dot x(t)=Ax(t) + Bu(t),  \\
y(t)=Cx(t)+Du(t), \quad t \geq t_0, x(t_0)=x_0,
\end{array}\right.
$$
where $A \in \mathbb{R}^{n\times n},$ $B \in \mathbb{R}^{n\times m}$, $C \in \mathbb{R}^{p\times n},$ $D \in \mathbb{R}^{p\times m},$ $x(t) \in \mathbb{R}^n$, and $u(t) \in \mathbb{R}^m$,
which are well analysed in \cite{Ant}.

In \cite{Sim2016} the author presents an overview of the current state methods of resolution of Lyapunov equation.
For large-scale problems we can point out methods based on Krilov subspaces, which consist of projecting the problem onto a significantly smaller Krylov space and then solving the
reduced order matrix equation. In \cite{Sim2007} the author proposes to project onto a block-wise Krylov subspace of the type
\begin{equation}\label{Krylov_racional_introducao}
\mathbf{K}_l(A,B)=span\{B, A^{-1}B, AB, A^{-2}B, A^{2}B,  A^{-3}B,..., A^{l-2}B, A^{-(l-1)}B\}.
\end{equation}

Another method of resolution of equation \eqref{lyap} that is widely used is the ADI (Alternating Direction Implicit) method.
It appeared in 1968 \cite{Smith} and in 1988 a technique that consists in using a control parameter was added, which motivated the name of the method \cite{Wachspress}. Since then some variations of the method have arisen, for instance, \cite{Penzl}, \cite{FRM}. In \cite{Sim2011} it has been proved that under certain conditions ADI is equivalent to a Krylov method with the rational basis given in \eqref{Krylov_racional_introducao}.

Here we introduce a new stationary iterative methods for solving the Lyapunov equation formulated as a vector form by means of the Kronecker product. In \cite{Sim2016}, it is said  that approaches based on the Kronecker formulations were abandoned since that linear system is of order $n^2$, which would be very expensive computationally speaking.
However, each iteration of our method demands only two algebraic operations: $(A-\sigma I)^{-1}\hat B$ and $A$ $(A+\sigma I)\tilde B$, where $\hat B$ and $\tilde B$ are matrices of the same order than B, as discussed in the next section. Like Jacobi, Gauss-Seidel and other splitting methods, our method has limitations as regards convergence. Anyway, it is an available simple method to solve Lyapunov equations.

\section{The method}

We want to calculate a low rank matrix $P$ which is an approximate solution of (\ref{lyap}).
Note that the equation (\ref{lyap}) can be written as the following standard linear equation  $\tilde{A}{p}={b}$, where $\tilde{A}=(I \otimes A +A \otimes I),$ ${p}=\text{vec}(P)$ and ${b}=\text{vec}(-BB^T)$. The symbol $\otimes$ denotes here the Kronecker product of matrices and
$\text(vec)(X)$ is the usual representation of a matrix $X$ as a column vector.

Let $\sigma$ be a positive real number. Note that the equation (\ref{lyap}) is also equivalent to $\tilde{A}_{\sigma}{p}={b}$,
where $$\tilde{A}_{\sigma}=\left[I\otimes (A-\sigma I)+(A+\sigma I) \otimes I\right].$$

From this we can define
 the splitting as follows:
 $$\tilde{A}_{\sigma}=M_{\sigma}-N_{\sigma},$$ where $M_{\sigma}=I\otimes (A-\sigma I)$ and $N_{\sigma}=-(A+\sigma I) \otimes I$.

Note that if $A$ is a stable matrix and $\sigma$ is positive, then $(A-\sigma I)$ is inversible, and so is the matriz  $M_{\sigma}$. Moreover,
$$M_{\sigma}^{-1}=I \otimes (A-\sigma I)^{-1}.$$

From a starting vector ${p}_0 \in \mathbb{R}^{n^2}$,
we define for $k=0,1,2,...$ the following iterative method:

\begin{equation}\label{splittin_lyap_geral}
{p}_{k+1}=M_{\sigma}^{-1}N_{\sigma}{p}_{k}+M_{\sigma}^{-1}{b},
\end{equation}
where
$$
M_{\sigma}^{-1}N_{\sigma}=-(A+\sigma I)\otimes  (A-\sigma I)^{-1}.
$$
A very well-known theorem about this class of iteration \cite{GOL} tells us that, for all starting vector $p_0$, the method converges
if and only if the spectrum of $M_{\sigma}^{-1}N_{\sigma}$ is less than 1. In our case,

\begin{theorem}\label{conv_splitting_lyap}
The sequence defined by \eqref{splittin_lyap_geral} converges to the solution of equation \eqref{lyap} if and only if
\begin{equation}\label{razao_alpha}
\left\vert \frac{\lambda_i+\sigma}{\lambda_j-\sigma} \right\vert < 1, \quad \forall  \lambda_i,\lambda_j \in \lambda(A).
\end{equation}
\end{theorem}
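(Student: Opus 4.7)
The plan is to reduce everything to computing the spectrum of $M_\sigma^{-1}N_\sigma$ and then invoking the standard convergence criterion for splitting methods that the text has already quoted (citing \cite{GOL}). That theorem says the iteration \eqref{splittin_lyap_geral} converges for every starting vector $p_0$ if and only if the spectral radius $\rho(M_\sigma^{-1}N_\sigma)$ is strictly less than $1$, so the whole task collapses into an eigenvalue calculation for the matrix
\[
M_\sigma^{-1}N_\sigma \;=\; -(A+\sigma I)\otimes (A-\sigma I)^{-1}.
\]

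First I would invoke the classical fact about Kronecker products: if $X\in\mathbb{R}^{n\times n}$ has eigenvalues $\{\mu_i\}$ and $Y\in\mathbb{R}^{n\times n}$ has eigenvalues $\{\nu_j\}$, then the $n^2$ eigenvalues of $X\otimes Y$ are exactly the products $\mu_i\nu_j$. Applying this with $X=-(A+\sigma I)$ and $Y=(A-\sigma I)^{-1}$, whose spectra are $\{-(\lambda_i+\sigma)\}$ and $\{(\lambda_j-\sigma)^{-1}\}$ respectively (well defined because $\sigma>0$ and $A$ is stable, so $\sigma\notin\lambda(A)$), I obtain
\[
\lambda\bigl(M_\sigma^{-1}N_\sigma\bigr)\;=\;\Bigl\{\,-\tfrac{\lambda_i+\sigma}{\lambda_j-\sigma}\,:\;\lambda_i,\lambda_j\in\lambda(A)\Bigr\}.
\]

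Taking absolute values, $\rho(M_\sigma^{-1}N_\sigma)=\max_{i,j}\bigl|(\lambda_i+\sigma)/(\lambda_j-\sigma)\bigr|$. This maximum is strictly less than $1$ if and only if the inequality \eqref{razao_alpha} holds for every pair $(\lambda_i,\lambda_j)$, and by the cited convergence theorem this is exactly the condition for convergence from every $p_0$ to the unique solution of $\tilde A_\sigma p=b$, i.e.\ the unique solution of \eqref{lyap}.

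There is no real obstacle here; the only thing worth being careful about is the eigenvalue formula for Kronecker products when $A$ is not diagonalizable. In that case I would use the Schur decomposition $A=UTU^*$ with $T$ upper triangular, write $-(A+\sigma I)\otimes (A-\sigma I)^{-1} = -(U\otimes U)\bigl((T+\sigma I)\otimes (T-\sigma I)^{-1}\bigr)(U\otimes U)^*$, and read off the eigenvalues from the diagonal of the (triangular) middle factor, which are exactly $-(\lambda_i+\sigma)/(\lambda_j-\sigma)$. This makes the spectrum computation rigorous in full generality and finishes the proof.
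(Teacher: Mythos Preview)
Your argument is correct and is exactly the approach the paper intends: the paper states, right before the theorem, that convergence is equivalent to $\rho(M_\sigma^{-1}N_\sigma)<1$ and then records the theorem without further proof, leaving the Kronecker eigenvalue computation for $-(A+\sigma I)\otimes(A-\sigma I)^{-1}$ implicit. Your write-up supplies that missing step (including the Schur-form justification for non-diagonalizable $A$), so it is a fleshed-out version of the same reasoning rather than a different route.
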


Now, by analysing the iterations of the method, we can state the following result.

\begin{proposition}
Let $P_0= 0$. Then
\begin{eqnarray}\label{iteradas}
P_{k+1}&=&\sum_{i=1}^{k+1}(-1)^{i+1}(A-\sigma I)^{-i}BB^T\left((A+\sigma I)^T\right)^{(i-1)} \nonumber \\
&=& P_k + (-1)^{k+2}(A-\sigma I)^{-k-1}BB^T\left((A+\sigma I)^T\right)^{k}
\end{eqnarray}
\end{proposition}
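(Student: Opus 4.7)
The plan is to recast the vectorized iteration~(\ref{splittin_lyap_geral}) as a matrix recurrence for $P_k$ and then establish the closed-form sum by induction on $k$, with $P_0=0$ providing the base case. The second (telescoping) equality will fall out in one line once the sum is in hand.

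First I would apply the Kronecker identity $(C\otimes D)\,\text{vec}(X) = \text{vec}(DXC^T)$ to both terms on the right-hand side of~(\ref{splittin_lyap_geral}). Using $M_\sigma^{-1}N_\sigma = -(A+\sigma I)\otimes (A-\sigma I)^{-1}$ and $b = \text{vec}(-BB^T)$, this turns the vector iteration into the matrix recurrence
$$
P_{k+1} \;=\; -(A-\sigma I)^{-1}\,P_k\,(A+\sigma I)^T \;-\; (A-\sigma I)^{-1} BB^T,
$$
so a single step consists in sandwiching $P_k$ between $-(A-\sigma I)^{-1}$ on the left and $(A+\sigma I)^T$ on the right and then subtracting the fixed low-rank correction $(A-\sigma I)^{-1}BB^T$.

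Next I would prove~(\ref{iteradas}) by induction on $k$. The base case $k=0$ is immediate from $P_0=0$ and amounts to verifying that the single summand $i=1$ reproduces $P_1$. For the inductive step, substituting the hypothesis for $P_k$ into the recurrence shows that the operator $-(A-\sigma I)^{-1}(\,\cdot\,)(A+\sigma I)^T$ acts term-by-term, increasing both the power of $(A-\sigma I)^{-1}$ on the left and the power of $(A+\sigma I)^T$ on the right by one and flipping the sign factor $(-1)^{i+1}$ to $(-1)^{i+2}$. A re-indexing $j=i+1$ produces the tail $\sum_{j=2}^{k+1}(-1)^{j+1}(A-\sigma I)^{-j}BB^T\bigl((A+\sigma I)^T\bigr)^{j-1}$, and the constant term contributes the missing $j=1$ summand (modulo a sign check that must be done carefully, since the $(-1)^{i+1}$ convention in~(\ref{iteradas}) and the sign of $b=\text{vec}(-BB^T)$ both feed into the $j=1$ term).

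The second equality in~(\ref{iteradas}) then requires no further work: the summand for $i=k+1$ is the unique new term appearing in $P_{k+1}$ that is not already present in $P_k$, so $P_{k+1}-P_k$ equals that summand. The main obstacle throughout is purely bookkeeping --- tracking signs, index shifts, and the order of factors in the Kronecker identity; there is no analytic content beyond a direct verification.
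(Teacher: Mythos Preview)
Your proposal is correct and follows essentially the same route as the paper: both arguments unvectorize~(\ref{splittin_lyap_geral}) via the identity $(C\otimes D)\,\text{vec}(X)=\text{vec}(DXC^T)$ to obtain the matrix recurrence $P_{k+1}=-(A-\sigma I)^{-1}P_k(A+\sigma I)^T + P_1$ and then proceed by induction. The only cosmetic difference is that the paper takes the telescoping identity $P_{k+1}=P_k+(-1)^{k+2}(A-\sigma I)^{-k-1}BB^T\bigl((A+\sigma I)^T\bigr)^{k}$ as its induction hypothesis and reads off the sum afterward, whereas you induct on the closed-form sum and obtain the telescoping identity by subtraction; these are equivalent orderings of the same computation.
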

\begin{proof}
$p_1 = M_{\sigma}^{-1}b = \left( I \otimes (A-\sigma I)^{-1}\right) b$, which is equivalent to $$P_1 = -(A-\sigma I)^{-1} BB^T.$$
%$p_2 = M_{\sigma}^{-1}N_{\sigma}p_1+M_{\sigma}^{-1}{b} = -(A+\sigma I)\otimes  (A-\sigma I)^{-1} p_1 + p_1$.
%But $$(A+\sigma I)\otimes  (A-\sigma I)^{-1} p_1 = vec (A-\sigma I)^{-1} P_1 (A+\sigma I)^T=$$
%$$=-vec (A-\sigma I)^{-2} BB^T (A+\sigma I)^T.$$
%Therefore, $P_2 = (-1)^2 (A-\sigma I)^{-2} BB^T (A+\sigma I)^T + (-1) (A-\sigma I)^{-1} BB^T$.
Suppose that for $k\ge 1$
$$P_k = P_{k-1} + (-1)^{k+1}(A-\sigma I)^{-k}BB^T\left((A+\sigma I)^T\right)^{k-1}.$$
Since for all $k\ge 0$ ${p}_{k+1}=-(A+\sigma I)\otimes  (A-\sigma I)^{-1}{p}_{k}+ p_1$, that is,
$$
P_{k+1} = - (A-\sigma I)^{-1} P_k (A+\sigma I)^T + P_1,$$
we have that
$$P_{k+1} = - (A-\sigma I)^{-1} P_{k-1} (A+\sigma I)^T + $$
$$ + (-1)^{k+2} (A-\sigma I)^{-1} (A-\sigma I)^{-k}BB^T\left((A+\sigma I)^T\right)^{k-1} (A+\sigma I)^T + P_1=$$
$$ = - (A-\sigma I)^{-1} P_{k-1} (A+\sigma I)^T + (-1)^{k+2} (A-\sigma I)^{-k-1}BB^T\left((A+\sigma I)^T\right)^{k} + P_1.$$
Hence,
$$P_{k+1} = P_k + (-1)^{k+2} (A-\sigma I)^{-k-1}BB^T\left((A+\sigma I)^T\right)^{k},$$

\end{proof}

\begin{remark}
 Note that for each step it suffices to compute $(A-\sigma I)^{-1}[(A-\sigma I)^{-k}B]$ and $(A+\sigma I)[(A+\sigma I)^{k-1}] B$, where $[(A-\sigma I)^{-k}B]$ and $[(A+\sigma I)^{k-1}B]$ have already been calculated. Therefore, although the method has been developed to solve a system of order $n^2$, each iteration demands only two algebraic operations with a matrix of order $n$.
\end{remark}

Simple calculation yields the following result.

\begin{proposition}\label{sigma_splitting}
Let $\lambda(A) = \{\lambda_1,\lambda_2,...,\lambda_n\}$, that is, the spectrum of $A$. Suppose $A$ is stable, that is,
$Re \, (\lambda)<0$ for all $\lambda \in \lambda(A)$. Then the sequence defined by \eqref{splittin_lyap_geral} converges to the solution of equation \eqref{lyap} if and only if
for all $1\le i, j\le n$
$$ \sigma > \frac{|\lambda_j|^2 - |\lambda_i|^2}{2 \, (Re\, \lambda_j + Re\, \lambda_i)}.$$
\end{proposition}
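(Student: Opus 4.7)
The plan is to reduce Proposition \ref{sigma_splitting} directly to Theorem \ref{conv_splitting_lyap} by squaring the spectral radius condition and expanding in terms of real and imaginary parts. Writing $\lambda_k = \operatorname{Re}\lambda_k + \mathrm{i}\operatorname{Im}\lambda_k$, I will use the elementary identities
\begin{equation*}
|\lambda_i + \sigma|^2 = |\lambda_i|^2 + 2\sigma \operatorname{Re}\lambda_i + \sigma^2,
\qquad
|\lambda_j - \sigma|^2 = |\lambda_j|^2 - 2\sigma \operatorname{Re}\lambda_j + \sigma^2.
\end{equation*}

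Next, I would observe that \eqref{razao_alpha} is equivalent to $|\lambda_i + \sigma|^2 < |\lambda_j - \sigma|^2$ for every pair $(i,j)$. Substituting the identities above, the $\sigma^2$ terms cancel and the condition simplifies to the linear inequality
\begin{equation*}
2\sigma\bigl(\operatorname{Re}\lambda_i + \operatorname{Re}\lambda_j\bigr) < |\lambda_j|^2 - |\lambda_i|^2.
\end{equation*}

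The key point, and essentially the only thing that requires care, is the sign of the coefficient of $\sigma$. Since $A$ is assumed stable, both $\operatorname{Re}\lambda_i$ and $\operatorname{Re}\lambda_j$ are strictly negative, so their sum is strictly negative. Dividing through by $2(\operatorname{Re}\lambda_i + \operatorname{Re}\lambda_j)$ therefore reverses the inequality and produces exactly
\begin{equation*}
\sigma > \frac{|\lambda_j|^2 - |\lambda_i|^2}{2\bigl(\operatorname{Re}\lambda_j + \operatorname{Re}\lambda_i\bigr)},
\end{equation*}
which is the stated condition. Since every step is an equivalence (squaring moduli is reversible because both sides are nonnegative, and dividing by a nonzero real is reversible with the appropriate sign flip), I obtain the ``if and only if'' claim. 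There is no real obstacle here beyond tracking the sign when passing from the condition on eigenvalues to the condition on $\sigma$; the rest is routine algebra.
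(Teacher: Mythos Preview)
Your proposal is correct and is precisely the ``simple calculation'' the paper alludes to without spelling out; the paper gives no explicit proof, so your expansion of $|\lambda_i+\sigma|^2$ and $|\lambda_j-\sigma|^2$, cancellation of the $\sigma^2$ terms, and division by the negative quantity $2(\operatorname{Re}\lambda_i+\operatorname{Re}\lambda_j)$ is exactly the intended argument.
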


\begin{remark} Since the matrix $A$ is real, without loss of generality we can suppose in the above inequality that $Im \, \lambda_i \ge 0$ and $Im \, \lambda_j \ge 0$.
Therefore, $$ \frac{|\lambda_j|^2 - |\lambda_i|^2}{2 \, (Re\, \lambda_j + Re\, \lambda_i)} =
\frac{|\lambda_j| + |\lambda_i|}{2 \, (Re\, \lambda_j + Re\, \lambda_i)} (|\lambda_j| - |\lambda_i|) \le$$
$$ \le \frac{|Re \, \lambda_j + Re\, \lambda_i| + (Im \, \lambda_j + Im \, \lambda_i)} {2 \, |Re\, \lambda_j + Re\, \lambda_i|} \left| |\lambda_j| - |\lambda_i|\right|. \label{sigma}$$
From another point of view,
$$ \frac{|\lambda_j|^2 - |\lambda_i|^2}{2 \, (Re\, \lambda_j + Re\, \lambda_i)} =
\frac{(Re \, \lambda_j)^2 - (Re \, \lambda_i)^2 + (Im \, \lambda_j)^2 - (Im \, \lambda_i)^2}{2 \, (Re\, \lambda_j + Re\, \lambda_i)} = $$
$$=\frac{Re \, \lambda_j - Re \, \lambda_i}{2} + \frac{Im \, \lambda_j + Im \, \lambda_i}{2 \, (Re\, \lambda_j + Re\, \lambda_i)} . (Im \, \lambda_j - Im \, \lambda_i).$$

In practical studies of stability, we just consider stable matrices that have their spectra contained in the left half-plane limited by lines of the type $y = \pm k x$.
For instance, it is usual to take $k\le 20$ in Brazilian electrical power systems stability analysis as safety margin (damping ratio of 5\%) \cite{GMP}.
In this case, from \ref{sigma}, it suffices that
$$\sigma \ge \min \left( \frac{21}{2} r(A),\max_{\lambda \in \lambda(A)} \frac{|Re \, \lambda|}{2} + 10 \max_{\lambda \in \lambda(A)} |Im \, \lambda|\right).$$
where $r(A)$ is the spectral radius of $A$, in order to guarantee convergence of the splitting method described above.
These parameters can be calculated, for instance, by methods that use M\"obius transforms \cite{lic}. MATLAB function $eigs$ also computes some subsets of eigenvalues,
e.g., $eigs(A,k,'largestreal')$ returns the k largest real part eigenvalues \cite{leh}.
\end{remark}

\section{Concluding remarks}

We introduced here an iteration method that starts with $p_0=0$. The  general formulae of the $kth$ iteration from another starting value is similar to the formulae (\ref{iteradas}) obtained with $p_0=0$. We could say that the formulae (\ref{lyap}) truely resulted from $p_1 = - M^{-1} b$.
Moreover, if $rank (B) = p$, each iteration realizes a rank-$p$ update.
We have computed approximate
solutions of different Lyapunov equations by using our method, typically applying to large sparse matrices that occur in the modelling of power system stability problems.
For starting vectors, we have followed the theorem below to calculate a good one.

\begin{theorem} \label{start}
Suppose that the columns of the $n\times p$ matrix $B$ in equation \eqref{lyap} are spanned by only $k$ eigenvectors of $A$, that is,
$B = V_k R_k$, where $V_k$ is a $n\times k$ matrix formed with those $k$ eigenvectors as columns, and $R_k$ is a $k\times p$ matrix.
Then the solution $P$ can be exactly computed.
\end{theorem}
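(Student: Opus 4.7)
The plan is to solve the Lyapunov equation by projecting onto the invariant subspace spanned by the columns of $V_k$. Since those columns are eigenvectors of $A$, there is a $k\times k$ diagonal matrix $D_k=\mathrm{diag}(\mu_1,\dots,\mu_k)$ of the corresponding eigenvalues with
\[
A V_k = V_k D_k,\qquad \text{and hence}\qquad V_k^T A^T = D_k V_k^T.
\]
I would make the ansatz $P = V_k X V_k^T$ for an unknown $k\times k$ matrix $X$ and plug it into \eqref{lyap}. Using the two identities above, the left-hand side collapses to $V_k(D_k X + X D_k)V_k^T$, and the right-hand side is $-V_k R_k R_k^T V_k^T$. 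So everything reduces to the small Lyapunov equation
\[
D_k X + X D_k = -R_k R_k^T.
\]

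The next step is to solve this reduced equation in closed form. Since $D_k$ is diagonal, the equation decouples entrywise,
\[
(\mu_i+\mu_j)\,X_{ij} = -(R_k R_k^T)_{ij},
\]
and the stability hypothesis $\operatorname{Re}(\mu_i)<0$ guarantees $\mu_i+\mu_j\neq 0$ for all $i,j$, so
\[
X_{ij}=-\frac{(R_k R_k^T)_{ij}}{\mu_i+\mu_j}.
\]
Setting $P=V_k X V_k^T$ with this explicit $X$ then yields a matrix that satisfies $AP+PA^T=-BB^T$ by construction, so by the uniqueness of the solution of \eqref{lyap} this is \emph{the} solution.

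The one subtle point, which I expect to be the main place a reader will want a word of justification, is that the columns of $V_k$ may be complex (eigenvectors of a real matrix come in conjugate pairs), so a priori $V_k X V_k^T$ is a complex matrix, whereas the true $P$ must be real. This is resolved by the uniqueness argument above: since $A$ is stable, \eqref{lyap} has a unique (real) solution, and any matrix satisfying it must coincide with that real solution. Alternatively, one can arrange the columns of $V_k$ so that complex eigenvectors appear in conjugate pairs and observe that the formula for $X$ is invariant under the induced conjugation, forcing $V_k X V_k^T$ to be real. Either way, the theorem follows, and in practice one obtains $P$ by solving a $k\times k$ diagonal system, which for $k\ll n$ is essentially free compared with the iterative method of Section~2.
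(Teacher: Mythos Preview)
Your argument is correct and follows essentially the same route as the paper: make the ansatz $P=V_kXV_k^{\!\top}$ (the paper uses $V_kX_kV_k^{H}$), use $AV_k=V_kD_k$ to reduce to a $k\times k$ diagonal Sylvester equation, and solve it entrywise---your formula $X_{ij}=-(R_kR_k^{\top})_{ij}/(\mu_i+\mu_j)$ is exactly the paper's Hadamard product $X_k=-C_k\circ R_kR_k^{H}$ written out componentwise. The only cosmetic difference is your use of the ordinary transpose versus the paper's conjugate transpose, which leads to denominators $\mu_i+\mu_j$ rather than $\lambda_i+\overline{\lambda_j}$; both ans\"atze produce a matrix satisfying \eqref{lyap}, and your explicit appeal to uniqueness to recover reality of $P$ is a nice touch the paper leaves implicit.
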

\begin{proof}
Let $P=V_kX_kV_k^H$. Let $D_k$ the diagonal matrix such that $AV_k = V_kD_k$. Then $AP + PA^T=-BB^T$ can be written as
$$V_kD_kX_kV_k^H + V_kX_k \overline{D_k} V_k^H = - V_k R_k R_k^H V_k^H.$$
Hence,
$$D_kX_k + X_k \overline{D_k} = - R_k R_k^H.$$
Therefore, $X_k = - C_k \circ R_k R_k^H$, where $C_k$ is the $k\times k$ Cauchy matrix
$$C_k = \left( \begin{array}{cccc} \frac{1}{\lambda_1 + \overline{\lambda_1}} & \frac{1}{\lambda_1 + \lambda_1} & \cdots & \frac{1}{\lambda_1 + \overline{\lambda_k}} \\
                                   \frac{1}{\overline{\lambda_1} + \overline{\lambda_1}} & \frac{1}{\overline{\lambda_1}+\lambda_1} & \cdots & \frac{1}{\overline{\lambda_1} + \overline{\lambda_k}} \\
                                       \vdots & \vdots 7 \vdots & \vdots \\
                                     \frac{1}{\lambda_k + \overline{\lambda_1}} & \frac{1}{\lambda_k + \lambda_1} & \cdots & \frac{1}{\lambda_k + \overline{\lambda_k}}
                                     \end{array} \right),
$$
and $\circ$ denotes the Hadamard product of matrices.
\end{proof}

From Theorem \eqref{start}, if we first calculate an invariant subspace of $A$ close to the subspace spanned by the columns of $B$, it generally results a good starting vector for our method,
that is a vector that yields a fast convergence of the method after a few iterations. There are tests and comparisons between our methods and ADI, performances of the method with acceleration etc
in \cite{the}, where the method was first introduced from what we have known up to now.

\bibliographystyle{amsplain}

\end{document}